\tikzset{>=latex} 
\colorlet{myblue}{blue!80!black}
\theoremstyle{plain}
\newtheorem{theorem}{Theorem}
\newtheorem*{theoremnonumBC}{Borel--Carath\'{e}odory Theorem}
\newtheorem{lemma}[theorem]{Lemma}
\newtheorem*{definition}{Definition}
\theoremstyle{definition}
\renewcommand{\Re}{\operatorname{Re}}
\newcommand{\abs}[1]{\left\lvert#1\right\rvert}
\begin{document}

\title{A note on trigonometric polynomials for lower bounds of $\zeta(s)$}
\author{Nicol Leong and Michael J. Mossinghoff}

\address{
    NL: University of New South Wales (Canberra) at the Australian Defence Force Academy, ACT, Australia
}
\email{nicol.leong@unsw.edu.au}
\address{
    MJM: Center for Communications Research, Princeton, NJ, USA
}
\email{m.mossinghoff@ccr-princeton.org}

\subjclass[2020]{Primary: 11M06, 42A05. Secondary: 11L03, 26D05.}
\keywords{Trigonometric polynomials, Riemann zeta function.}

\begin{abstract}
Non-negative trigonometric polynomials satisfying certain properties are employed when studying a number of aspects of the Riemann zeta function.
When establishing zero-free regions in the critical strip, the classical polynomial $3+4\cos(\theta)+\cos(2\theta)$ used by de la Vall\'ee Poussin has since been replaced by more beneficial polynomials with larger degree.
The classical polynomial was also employed by Titchmarsh to provide a lower bound on $\abs{\zeta(\sigma+it)}$ when $\sigma>1$.
We show that this polynomial is optimal for this purpose.
\end{abstract}

\maketitle

\section{Introduction}\label{sec intro}

In the study of the Riemann zeta function $\zeta(s)$, a particular class of analytic functions arises naturally. This class was described by Jensen \cite{jensen1919investigation} in the following way. Consider an analytic function $f(s)$ on $\mathbb{C}$, which is holomorphic in an open disk $D_r(s_0)$ of radius $r$ centered at $s_0$; in other words, $f(s)$ has a power series with radius of convergence at least $r$. Assume that $f$ is bounded on the disk $D$ (or that $\abs{\Re(f(s))}$ is bounded, or a similar condition), and that we know the value of the function at a point inside the disk, usually the centre, i.e., $f(s_0)$ is known. General statements may then be made regarding the function, such as bounds on its absolute value, or its real or imaginary part, or its derivative, or regarding its zeros or regions where there are no zeros, etc.
Such statements include examples of inequalities proved using Schwarz's lemma, or more generally using the maximum modulus principle. Of these, results like Jensen's formula and the Borel--Carath\'{e}odory theorem make frequent appearances when studying $\zeta(s)$. The reader is referred to \citelist{\cite{jensen1919investigation}\cite{maz2007sharp}\cite{titchmarsh1986theory}} for more details on such topics.

Due to the influence of the maximum modulus principle, when applying such results to a function $f(s)$ from this class, one encounters a term of the form $f(s)/f(s_0)$, and when $f(s) = \zeta(s)$, there is often a need to bound $\zeta(s_0)^{-1}$ where $\sigma_0 >1$. In this paper, we focus on explicit bounds in this case, where it is important to estimate this well.

When $\sigma >1$, one may estimate $|\zeta(s)|^{-1}$ using the trivial bounds \eqref{trivial zetas}, which depend only on $\sigma$. However, when $\sigma$ is very close to $1$, these bounds are poor, due to the presence of the pole of $\zeta(s)$ at $s=1$. This is discussed in greater detail in Section~\ref{details 1}. 
Titchmarsh \cite[\S3]{titchmarsh1986theory} alleviates this by the use of a non-negative trigonometric polynomial. Recall the following definition (see also \cite[\S3.3]{titchmarsh1986theory}).
\begin{definition} \label{def trig poly}
A non-negative trigonometric polynomial with degree $N$ has the form
\begin{equation}\label{trig poly}
   f(\theta) = \sum_{n=0}^N a_n\cos(n\theta),
\end{equation}
where the coefficients $a_n$ are real, $a_0 >0$, $a_N\neq0$, and $f(\theta)\ge 0$ for all real $\theta$.
\end{definition}
Note that such a polynomial is obtained by multiplying together factors which either have the form $(c_1+c_2\cos(\theta))$ or $(c_3\cos(\theta))^2 = (c_3^2 /2)(1+\cos(2\theta))$, the latter factor squared to ensure non-negativity.
Thus it was assumed that $a_0\neq0$ in \eqref{trig poly}.

Titchmarsh showed that the trivial bound used to estimate $|\zeta(s)|^{-1}$ could be replaced by one depending also on $t$, using a simple polynomial of degree $2$,
\begin{equation}\label{classical}
    2(1+\cos(\theta))^2 = 3+4\cos(\theta)+\cos (2\theta).
\end{equation}
This is commonly referred to as the \textit{classical trigonometric polynomial}. We expand more on this in Section~\ref{details 2}.

This was not the first application of this polynomial in number theory. Non-negative trigonometric polynomials are used to establish zero-free regions for $\zeta(s)$.
This was famously pioneered by de la Vall\'ee Poussin \cite{dlVP}, who used \eqref{classical} to show that $\zeta(\sigma +it)\neq 0$ in the region
\begin{equation*}
\sigma \ge 1- \frac{1}{C\log t},
\end{equation*}
with $C=30.4679$ for $t$ sufficiently large. There is a long history of results where much effort has been put towards widening the region by reducing the size of $C$.
Substantial improvements have been found by using (amongst other things) polynomials with higher degree. For instance, a polynomial of degree $8$ is used in \cite{kondrat1977some} to obtain $C=9.547897$, and one of degree $16$ in \cite{MossinghoffTrudgian2015} to obtain $C=5.573412$. 

It may be surprising then that polynomials with larger degree cannot improve the bounds on $|\zeta(\sigma+it)|^{-1}$ using Titchmarsh's method.
In this article, we show that the classical non-negative trigonometric polynomial is optimal, when applied to obtaining such bounds on $\abs{\zeta(s)}^{-1}$ for $\sigma >1$.
We prove the following statement.

\begin{theorem}\label{cor}
Suppose $f(\theta)$ is a non-negative trigonometric polynomial with degree $N\geq2$ as in \eqref{trig poly}, which satisfies Conditions \textbf{I}, \textbf{II}, and \textbf{III} from Section~\ref{secConditions}.
For $\sigma >1$, when using $f(\theta)$ to obtain a lower bound on $|\zeta(\sigma+it)|$ via \eqref{trig ineq general}, then $N$ is even, and
\begin{equation*}
\frac{a_0}{a_1} \ge \frac{3}{4}.
\end{equation*}
Furthermore, the classical polynomial \eqref{classical} is the unique optimal polynomial, up to scaling.
\end{theorem}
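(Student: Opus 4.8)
## Proof Proposal

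The plan is to first understand exactly what Titchmarsh's method gives. The standard argument starts from the Euler product: for $\sigma > 1$,
\begin{equation*}
\log\zeta(\sigma+it) = \sum_{p}\sum_{m\ge 1}\frac{1}{m p^{m\sigma}}\cos(mt\log p).
\end{equation*}
Taking a non-negative trigonometric polynomial $f(\theta) = \sum_{n=0}^N a_n\cos(n\theta)$ with all $a_n \ge 0$ and forming the combination $\sum_n a_n \Re\log\zeta(\sigma + int)$, one obtains (using $f(mt\log p)\ge 0$) an inequality of the shape
\begin{equation*}
a_0\log\zeta(\sigma) + \sum_{n=1}^N a_n\log\abs{\zeta(\sigma+int)} \ge 0,
\end{equation*}
which is presumably the inequality \eqref{trig ineq general} referenced in the statement (with Conditions \textbf{I}--\textbf{III} encoding precisely that $a_0,\dots,a_N\ge 0$, that $N\ge 2$ so there is a genuine $t$-dependence, and perhaps a normalization). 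Rearranging,
\begin{equation*}
\log\abs{\zeta(\sigma+it)} \ge -\frac{a_0}{a_1}\log\zeta(\sigma) - \frac{1}{a_1}\sum_{n=2}^N a_n\log\abs{\zeta(\sigma+int)}.
\end{equation*}
To turn this into a clean lower bound on $\abs{\zeta(\sigma+it)}$ that beats the trivial bound, one must dispose of the terms $\log\abs{\zeta(\sigma+int)}$ for $n\ge 2$; the only way this works uniformly in $t$ is to use the trivial \emph{upper} bound $\abs{\zeta(\sigma+int)} \le \zeta(\sigma)/(\sigma-1)$ or similar — but crucially, this is only legitimate when the corresponding coefficient enters with the right sign, i.e. one needs $a_n \ge 0$ for the $n\ge 2$ terms too. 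So the optimization reduces to: among all non-negative trig polynomials with $a_n\ge 0$, make the leading constant in the resulting bound as small as possible, and the relevant quantity to minimize turns out to be (after the dust settles) essentially $a_0/a_1$ together with $(\sum_{n\ge 2} a_n)/a_1$.

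The key step is then a pure inequality about non-negative trigonometric polynomials: \emph{if $f(\theta) = a_0 + a_1\cos\theta + \sum_{n=2}^N a_n\cos(n\theta) \ge 0$ for all $\theta$ with $a_1 > 0$, then $a_0/a_1 \ge 3/4$, with equality iff $f$ is a scalar multiple of $3 + 4\cos\theta + \cos 2\theta$.} This is a known extremal fact, and I would prove it via the Fej\'er–Riesz representation: every non-negative trigonometric polynomial of degree $N$ can be written as $f(\theta) = \abs{g(e^{i\theta})}^2$ where $g(z) = \sum_{k=0}^N c_k z^k$. Then $a_0 = \sum_k \abs{c_k}^2$ and $a_1 = 2\Re\sum_k c_k\overline{c_{k+1}}$. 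By Cauchy–Schwarz (or directly, $2\Re(c_k\overline{c_{k+1}}) \le \abs{c_k}^2 + \abs{c_{k+1}}^2$),
\begin{equation*}
a_1 \le \sum_{k=0}^{N-1}\bigl(\abs{c_k}^2 + \abs{c_{k+1}}^2\bigr) = 2\sum_{k=0}^N\abs{c_k}^2 - \abs{c_0}^2 - \abs{c_N}^2 \le 2a_0.
\end{equation*}
This only gives $a_0/a_1 \ge 1/2$, which is not sharp — so I would need to be more careful. The improvement to $3/4$ comes from noting that the simple bound $2\Re(c_k\overline{c_{k+1}})\le \abs{c_k}^2+\abs{c_{k+1}}^2$ is wasteful; one should instead optimize over how much weight each $\abs{c_k}^2$ contributes. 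Parametrizing, one wants $a_1/a_0 \le \max$ over unit vectors $(c_0,\dots,c_N)$ of $2\Re\sum c_k\overline{c_{k+1}}$, which is the largest eigenvalue of the tridiagonal matrix with $0$ on the diagonal and $1$ on the off-diagonals — and that maximum eigenvalue is $2\cos(\pi/(N+2))$. Hence $a_0/a_1 \ge 1/(2\cos(\pi/(N+2)))$. For $N = 2$ this is $1/(2\cos(\pi/4)) = 1/\sqrt2 \approx 0.707$, which is \emph{below} $3/4$ — so the eigenvalue bound alone still isn't the whole story, and the extra constraint $a_n \ge 0$ for all $n$ (not just non-negativity of $f$) must be doing real work.

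So the main obstacle — and where I expect the heart of the paper lies — is incorporating the sign constraints $a_n \ge 0$ for $2 \le n \le N$ (Conditions \textbf{I}--\textbf{III}), which are forced by Titchmarsh's method, into the extremal problem. I would reformulate: minimize $a_0/a_1$ subject to $f(\theta)\ge 0$, $a_n\ge 0$ for all $n$, $a_1 > 0$. This is a linear-programming-type problem over the cone of non-negative trig polynomials intersected with the positive orthant of coefficients. I would attack it by the dual/separation approach: to show $a_0 \ge \tfrac34 a_1$ it suffices to exhibit a non-negative measure $d\mu(\theta)$ on $[0,2\pi)$, or more simply a finite set of angles $\theta_j$ with weights $w_j > 0$, such that $\sum_j w_j\cos(n\theta_j) \le 0$ for the coefficients we are not allowed to control the sign of — i.e. find a "certificate" $\sum_j w_j f(\theta_j) \ge 0$ that, when expanded, reads $\tfrac34 a_1 - a_0 \le (\text{non-positive combination of } a_2,\dots,a_N)$, using $a_n\ge 0$. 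Concretely I would guess the certificate is supported at $\theta$ near $0$ (where all $\cos(n\theta)$ are positive and large) balanced against $\theta$ where $\cos\theta$ is most negative; evaluating $f$ at $\theta = 2\pi/3$ gives $\cos(\theta)=-1/2$, $\cos(2\theta)=-1/2$, $\cos(3\theta)=1$, which already hints at the $3,4,1$ pattern. I would then pin down the exact weights (likely involving $\theta = 0$ and $\theta = 2\pi/3$, or a short list including one or two more points to kill higher harmonics using only their non-negativity), verify $a_0 - \tfrac34 a_1 \ge \tfrac14(\text{sum of non-negative multiples of }a_2,\dots,a_N)\ge 0$, and read off that equality forces $a_n = 0$ for $n\ge 3$, $a_2 = a_0/3$, $a_1 = 4a_0/3$ — i.e. the classical polynomial up to scaling — and simultaneously that $N$ must be even (odd leading term is incompatible with $f\ge 0$ and $a_N>0$, which is standard, but here also falls out because $a_N$ would be forced to $0$). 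The routine parts I would not belabor: the precise form of \eqref{trig ineq general}, the trivial bounds \eqref{trivial zetas}, and checking that the classical polynomial actually achieves the bound.
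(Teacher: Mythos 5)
Your proposal contains a genuine gap: the key lemma you set out to prove is false as stated, and the reason is that you never identify or use Condition~\textbf{II}, which is the load-bearing hypothesis. Condition~\textbf{II} in the paper is $\sum_{n=0}^N \abs{a_n} \le 2a_1$, and it is \emph{not} a sign constraint; it comes from requiring the product of $\zeta$-factors on the right side of \eqref{trig ineq general} to have total exponent at most $1$, so that the final bound stays of order $\log t$. You speculate that Conditions~\textbf{I}--\textbf{III} encode ``$a_0,\dots,a_N\ge 0$, \dots and perhaps a normalization,'' and then try to prove that every non-negative trigonometric polynomial with $a_n\ge 0$ and $a_1>0$ satisfies $a_0/a_1\ge 3/4$. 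That claim is refuted by the degree-$2$ Fej\'er extremal polynomial $2 + 2\sqrt2\cos\theta + \cos 2\theta$: it is non-negative (its minimum is $0$, attained at $\cos\theta=-1/\sqrt2$), all of its coefficients are positive, and $a_0/a_1 = 1/\sqrt2 < 3/4$. You even compute the Fej\'er bound $1/\sqrt2$ and notice it falls short of $3/4$, but you misattribute the discrepancy to the sign conditions on the $a_n$ rather than to the missing Condition~\textbf{II}; for this counterexample $\sum\abs{a_n} = 3+2\sqrt2 > 4\sqrt2 = 2a_1$, so Condition~\textbf{II} fails, which is exactly why it is excluded. Since the extremal problem you pose does not include this constraint, any certificate of the form you describe, $\sum_j w_j f(\theta_j)\ge 0$ with $w_j>0$, can only certify inequalities that hold for all non-negative $f$ with non-negative coefficients, and no such certificate can give $a_0/a_1\ge 3/4$.

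The paper's actual argument is quite different and worth noting. Normalizing $a_0=1$, $a_1=1+r$ and substituting $x=\cos\theta$, non-negativity of the Chebyshev form $g(x)$ at $x=-1$ gives $r\le\sum_{n\ge2}(-1)^n a_n$, while Condition~\textbf{II} gives $r\ge\sum_{n\ge2}\abs{a_n}$. Squeezing these forces equality, which (a)~pins down the sign pattern $(-1)^n a_n=\abs{a_n}$, and (b)~forces $g(-1)=0$. Because $g\ge0$ on $[-1,1]$ and vanishes at the left endpoint, the one-sided derivative condition $g'(-1)\ge0$ becomes available, and computing it via $T_n'(-1)=(-1)^{n-1}n^2$ yields the linear constraint $\sum_{n\ge2}\abs{a_n}(n^2-1)\le1$. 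Maximizing $r=\sum\abs{a_n}$ subject to this is the elementary Lemma~\ref{seq lemma}, whose unique maximizer is $a_2=1/3$, all others zero, giving $r=1/3$ and hence $a_0/a_1=3/4$ with the classical polynomial as the unique extremal. The parity statement that $N$ is even then falls out because the sign pattern from step (a) combined with Condition~\textbf{III} forces $a_n=0$ for odd $n>2$. So the heart of the paper is the boundary/derivative argument at $x=-1$ driven by Condition~\textbf{II}; your LP-duality certificate idea is a reasonable instinct for extremal problems of this flavor, but it cannot succeed until Condition~\textbf{II} is built into the feasible set.
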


\section{A motivating example}\label{details 1}

We illustrate one example where such bounds are required, omitting details that are not relevant to our discussion. For convenience, we recall the following version of the Borel--Carath\'{e}odory theorem \cite[Theorem~14]{hiaryleongyangArxiv}.

\begin{theoremnonumBC}
Let $s_0$ be a complex number, and let $R$ be a positive number possibly depending on $s_0$. 
Suppose that the function $f(s)$ is analytic in a region containing
the disk $|s-s_0|\le R$. Let $M$ denote the maximum of $\textup{Re}\, f(s)$ on the boundary $|s-s_0|=R$.
Then, for any $r\in (0,R)$ and any $s$ such that $|s-s_0|\le r$, we have
\begin{equation*}
|f(s)| \le \frac{2rM}{R-r} + \frac{R+r}{R-r}|f(s_0)|.
\end{equation*}
If in addition $f(s_0)=0$, then 
for any $r\in (0,R)$ and any $s$ with $|s-s_0|\le r$, we have
\begin{equation*}
|f'(s)| \le \frac{2RM}{(R-r)^2}.
\end{equation*}
\end{theoremnonumBC}

Let $s=\sigma +it$ and $s_0=\sigma_0 +it$, where $\sigma_0 := 1+\delta$ with $\delta >0$. Suppose we required an upper bound on $|\zeta(\sigma+it)|^{-1}$. For $0<\sigma <\sigma_0 \le 2$, say, we first obtain a bound on the absolute value of $f(s)$ := $\zeta'(s)/\zeta(s)$ in the region $|s-s_0|\le r$. This is done via the first assertion of the Borel--Carath\'{e}odory theorem, where the usual practice is to take $M$ such that $|\zeta(s)/\zeta(s_0)|\le M$. (See \citelist{\cite{trudgian2015explicit}\cite{hiaryleongyangArxiv}\cite{cully2024explicit}} for more details.)

To obtain such an $M$ for applying the theorem, the two quantities $|\zeta(s)|$ and $|\zeta(s_0)|^{-1}$ are estimated separately. The main difficulty arises due to the need for a uniform bound in some radius $r$ around $s_0$, i.e., we need a bound on $|\zeta(s_0 +re^{i\theta})/\zeta(s_0)|$, due to the effect of the maximum modulus principle in the background of the Borel--Carath\'{e}odory theorem.

When obtaining explicit bounds for $|\zeta(s)|$, the cases $\sigma\le 1$ and $\sigma >1$ are treated differently. For $\sigma \le 1$, very good point-wise upper bounds on $|\zeta(s)|$ occur in the literature (see for example \cite{cully2024explicit, hiarypatelyang2022, hiaryleongyangArxiv}), and these can be made uniform over a range of $\sigma$ by utilising a convexity argument like the Phragm{\'e}n--Lindel{\"o}f Theorem \cite[\S29, \S33]{rademacher1973Topics}. Lower bounds require zero-free regions, which make the constants involved much harder to deal with (see for instance \cite{trudgian2015explicit}).

{When estimating $|\zeta(s)|$ and $|\zeta(s)|^{-1}$ for $\sigma >1$, the trivial bounds 
\begin{equation}\label{trivial zetas}
    |\zeta(s)|\le \zeta(\sigma)\qquad \text{and} \qquad |\zeta(s)|^{-1} \le \frac{\zeta(\sigma)}{\zeta(2\sigma)}
\end{equation}
are optimal in a particular sense.
This so-called optimality is illustrated in \cite[Theorems~8.4, 8.4(A), 8.6, 8.7]{titchmarsh1986theory}, where we see that for $\sigma >1$, the bounds in \eqref{trivial zetas} hold for all values of $t$, while
\begin{equation}\label{trivial optimal}
    |\zeta(s)| \ge (1-\epsilon)\zeta(\sigma)\qquad \text{and} \qquad |\zeta(s)|^{-1} \ge (1-\epsilon)\frac{\zeta(\sigma)}{\zeta(2\sigma)}
\end{equation}
hold for some indefinitely large values of $t$. In addition, the functions $\zeta(s)$ and $\zeta(s)^{-1}$ are unbounded in the open region $\sigma >1$, $t>\delta >0$.

However, we could hope to improve on the trivial bound for $|\zeta(s)|^{-1}$ for $\sigma >1$ by putting a restriction on our region, in terms of a relationship between $\sigma$ and $t$ (e.g., $\sigma = 1+ (\log t)^{-1}$).
This improvement can then be done with a non-negative trigonometric polynomial.

\section{The problem}\label{details 2}

In what follows, we always assume $\sigma>1$. Consider for instance the classical trigonometric polynomial \eqref{classical}. By the usual process (see \cite[\S3.3]{titchmarsh1986theory}), the non-negativity of \eqref{classical}, with the relation
\begin{equation*}
    |\zeta(\sigma +it)| = \exp \left( \sum_{p\,\text{prime}}\sum_{m=1}^\infty \frac{\cos(mt \log p)}{mp^{m\sigma}}\right)
\end{equation*}
leads to $\zeta^3(\sigma)|\zeta^4(\sigma+it)\zeta(\sigma +2it)| \ge 1$, where the exponents correspond to the coefficients of the right side of \eqref{classical}. This then implies
\begin{equation}\label{trig ineq}
    \left| \frac{1}{\zeta(\sigma+it)}\right| \le \zeta(\sigma)^{3/4}|\zeta(\sigma + 2it)|^{1/4},
\end{equation}
and the right side of \eqref{trig ineq} can be estimated easily using existing bounds on $\zeta(s)$. If this is done using trivial bounds in \eqref{trivial zetas}, we see that \eqref{trig ineq} is superior as long as
\begin{equation}\label{compare}
    |\zeta(\sigma +2it)| < \frac{\zeta(\sigma)}{\zeta(2\sigma)^4}.
\end{equation}
The validity of this inequality is not immediately obvious, as the right side of \eqref{compare} decreases to $(6/\pi^2)^4\zeta(\sigma)\approx 0.136\zeta(\sigma)$ as $\sigma\to 1^+$. On the contrary, by \eqref{trivial optimal} with $\epsilon = 1/2$ say, we have that $|\zeta(\sigma+2it)|\ge 0.5\zeta(\sigma)$ for some indefinitely large values of $t$. 

Nevertheless, the remaining factor of $\zeta(\sigma)$ dominates since it approaches infinity as $\sigma\to 1^{+}$. Thus the advantage that \eqref{trig ineq} has over the second bound in \eqref{trivial zetas}, is to shift some weight away from $\zeta(\sigma)$ and transfer it to $|\zeta(\sigma+2it)|$. The latter factor is much easier to deal with, even if $\sigma =1$. See for instance \cite[Theorem 1]{hiaryleongyangArxiv}, where it is shown that $|\zeta(1+it)|\le 1.731\log t/\log\log t$ for $t\ge 3$. Known bounds on $|\zeta(1+it)|$ of various other orders are also detailed in \cite[\S 1]{hiaryleongyangArxiv}.

The next natural question was then to see if we could improve \eqref{trig ineq} by using higher degree trigonometric polynomials as done for zero-free regions. The goal was to shift more weight away from the dominant term $\zeta(\sigma)$, i.e., reduce $3/4$ in \eqref{trig ineq}, at the cost of including more factors $|\zeta(\sigma+nit)|$, for fixed $n>2$, on the right-hand side. It turns out that this is not possible.

\section{Some conditions}\label{secConditions}

For work on zero-free regions of the zeta function, non-negative trigonometric polynomials as in \eqref{trig poly} are required which satisfy $a_n\ge 0$ for each $n$ and $a_1 > a_0$.
Different conditions are required on the coefficients in our problem of interest. 
In our case, the required criteria are
\begin{align*}
    &Condition~\textbf{ I}.\quad\hspace{3.75mm} a_1 > 0, \\
    &Condition~\textbf{ II}.\quad\, \sum_{n=0}^N |a_n| \le 2 a_1, \\
    &Condition~\textbf{ III}.\quad a_n \ge 0 \text{ for all }n\ge 2.
\end{align*}

Before we discuss the need for these conditions, we first state some assumptions for our work. First, we study bounds of size $O(\log t)$ or smaller, more precisely, bounds of the form $O(f(t))$ where $f(Ct) = f(t) + o(f(t))$ for any positive constant $C$.
Second, we are interested in the asymptotic behavior of $\zeta(s)^{-1}$, that is, we seek an improvement that is valid for all $t\geq t_0$, and do not consider potential improvements that are valid only in a finite range. Third, whenever estimating products of terms of the form $|\zeta(\sigma+int)|$ with $n$ a fixed integer, we consider bounds that are independent of $n$, so do not consider the possibility of some terms admitting better bounds than others.

\subsection{Conditions~\textbf{I} and~\textbf{III}}
By the same argument that leads to \eqref{trig ineq}, a non-negative trigonometric polynomial with degree $N$ and coefficients $a_n$ gives rise to the inequality
\begin{equation}\label{trig ineq general}
    \left| \frac{1}{\zeta(\sigma+it)}\right| \le \Big( \zeta(\sigma)^{a_0}|\zeta(\sigma + 2it)|^{a_2}\cdots|\zeta(\sigma+Nit)|^{a_N} \Big)^{1/a_1}.
\end{equation}
The reason for Condition~\textbf{I} is then clear. 
Concerning Condition~\textbf{III}, we can disregard the possibility of negative $a_n$ since for $k>0$, in order to estimate a factor of $|\zeta(\sigma+nit)|^{-k}$ on the right side of \eqref{trig ineq general}, we would need an additional application of the trigonometric polynomial, which increases $a_0$, and this is undesirable as explained at the end of the previous section. Furthermore, if we already have good bounds for such a factor, then we have no need of this method.

\subsection{Condition~\textbf{II}}
Now suppose we want to establish an overall bound of a certain order, say $\zeta(s)^{-1} = O(\log t)$, via \eqref{trig ineq general}. It is clear that each factor on the right side of \eqref{trig ineq general} needs to be at worst of order $\log t$, and asymptotically no larger, so that 
\begin{equation*}
    \frac{1}{a_1}\left(\sum_{n=0}^N |a_n| -a_1\right) \le 1,
\end{equation*}
which is exactly Condition~\textbf{II}. The argument is similar for bounds of different orders. For more information, see for instance \cite[Lemma~8]{cully2024explicit}, in which $\zeta(s)^{-1} = O(\log t/\log\log t)$ is obtained.

For dealing with the right side of \eqref{trig ineq general}, there are existing bounds in the literature where $\zeta(\sigma +int)=O(\log t)$, where $n$ is fixed. These are either obtained with trivial bounds like 
\begin{equation*}
    |\zeta(\sigma+it)| \le \zeta(\sigma) \le \frac{\sigma}{\sigma-1},\qquad \sigma>1,
\end{equation*}
or more involved methods such as Euler--Maclaurin summation (see \cite{backlund1916nullstellen}, \cite[Proposition~A.1]{carneiro2022optimality}).
In the former case, we set the restriction $\sigma = 1+\delta$, for some $\delta >0$, and eventually choose $\delta = 1/\log t$, which will yield $\zeta(s)=O(\log t)$. This method also allows for bounds of other pre-determined orders to be obtained (for $\sigma >1$), using different choices for $\delta$. For instance, a pre-determined order of $\zeta(s) =O(\log t/\log\log t)$ would require choosing $\delta = \log\log t /\log t$. 

Alternatively, there is the possibility of savings by avoiding the trivial bound here and instead using some other machinery, such as exponential sums. The disadvantage there is the difficulty of arriving at a pre-determined order, since the methods there are much more complicated and less amenable to modification.

\section{Best possible polynomial}\label{secBest}

We prove our main results. We begin with an ancillary lemma, which allows us to prove a general result concerning an extremal problem for non-negative trigonometric polynomials. We then specialise this to our problem.

\begin{lemma}\label{seq lemma}
Let $(a_n)$ be a sequence of non-negative real numbers for $2\le n\le N$.
If
\begin{equation}\label{an constraint}
\sum_{n=2}^N a_n(n^2 -1) = 1,
\end{equation}
then $\sum_{n=2}^N a_n \le 1/3$, and the maximum is attained precisely when $a_2 = 1/3$ and $a_n = 0$ for $3\leq n\leq N$.
\end{lemma}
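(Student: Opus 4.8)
The plan is to exploit the fact that the constraint $\sum_{n=2}^N a_n(n^2-1)=1$ weights each coefficient $a_n$ by the factor $n^2-1$, and that this factor is strictly increasing in $n$ with minimum value $3$ at $n=2$. Since all $a_n$ are non-negative, replacing the weight $n^2-1$ by its smallest value $3$ in the constraint can only decrease the left side, giving
\begin{equation*}
1 = \sum_{n=2}^N a_n(n^2-1) \ge 3\sum_{n=2}^N a_n,
\end{equation*}
hence $\sum_{n=2}^N a_n \le 1/3$. This is the entire inequality; the work is in the equality analysis.

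For the characterization of the extremal sequence, I would observe that equality holds in the displayed step exactly when $a_n(n^2-1) = 3a_n$ for every $n$ with $2\le n\le N$, i.e. when $a_n=0$ for all $n\ge 3$ (since $n^2-1>3$ strictly for $n\ge 3$), while $a_2$ is unconstrained by this step. Feeding $a_n=0$ for $n\ge3$ back into the constraint \eqref{an constraint} forces $3a_2=1$, so $a_2=1/3$. Conversely, the sequence $a_2=1/3$, $a_n=0$ for $3\le n\le N$ clearly satisfies the constraint and attains $\sum_{n=2}^N a_n = 1/3$, so it is optimal and, by the argument just given, the unique optimizer.

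There is essentially no obstacle here: the only thing to be slightly careful about is the degenerate case $N=2$, where the sum has the single term $a_2$, the constraint reads $3a_2=1$, and the conclusion $a_2=1/3$ is immediate and consistent with the general statement (the range $3\le n\le N$ being empty). I would also note explicitly that non-negativity of the $a_n$ is what makes the weight-replacement bound valid, since otherwise terms with $n\ge3$ could be negative and the inequality would fail; this is the one hypothesis doing real work. The lemma will then be applied in the main proof with the $a_n$ being (a rescaling of) the coefficients of the trigonometric polynomial, the constraint \eqref{an constraint} arising from the order-of-growth bookkeeping in Conditions \textbf{II} and \textbf{III}.
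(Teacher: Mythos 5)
Your proof is correct, and it takes a genuinely different (and arguably cleaner) route than the paper. The paper argues by exchange plus compactness: it assumes some $a_m>0$ with $m>2$, constructs a competing sequence $(b_n)$ by zeroing out $a_m$ and transferring the corresponding mass to $b_2$ so as to preserve the constraint while strictly increasing the sum, concludes $(a_n)$ is not optimal, and then invokes compactness of the feasible set to guarantee an optimizer exists and must be the claimed one. Your argument avoids the existence question entirely: you bound the weights $n^2-1$ below by their minimum $3$, which immediately gives $1 = \sum a_n(n^2-1) \ge 3\sum a_n$, and the equality analysis falls out because $n^2-1>3$ strictly for $n\ge3$. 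This is a one-line direct inequality rather than a variational argument; it buys simplicity and makes the uniqueness of the extremizer transparent, whereas the paper's exchange argument is perhaps more in the spirit of how one would discover the extremal configuration in a less trivial optimization. Both are valid; yours is shorter and needs no compactness.
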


\begin{proof}
Suppose $a_m>0$ for some $m>2$.
Define a sequence $(b_n)$ by
\begin{equation*}
    b_n = \begin{cases}
        a_2 + \frac{1}{3}\sum_{m} a_{m}(m^2 -1) &\qquad \text{if } n=2,\\
        0 &\qquad \text{if } n=m,\\
        a_n &\qquad \text{otherwise.}
    \end{cases}
\end{equation*}
Then $\sum_{n=2}^N b_n (n^2 -1) = 1$ and $\sum_{n=2}^N b_n > \sum_{n=2}^N a_n$, so that $(a_n)$ is not optimal.
Now the set of allowable sequences subject to \eqref{an constraint} is compact for fixed $N$, so an optimal sequence $(a_n)$ exists, and this is therefore necessarily the sequence of the conclusion.
\end{proof}

\begin{theorem}\label{main}
Suppose $f(\theta)$ is a non-negative trigonometric polynomial with degree $N\ge 2$ as in \eqref{trig poly}, which satisfies Conditions \textbf{I} and \textbf{II} from Section \ref{secConditions}. Then
\begin{equation*}
    \frac{a_0}{a_1} \ge \frac{3}{4}.
\end{equation*}
\end{theorem}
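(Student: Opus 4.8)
The plan is to exploit the two structural facts we have about $f(\theta)$: non-negativity on $\mathbb{R}$, and the two arithmetic constraints coming from Conditions \textbf{I} and \textbf{II}. First I would record what non-negativity buys us at a well-chosen value of $\theta$. Taking $\theta=\pi$ in \eqref{trig poly} gives $f(\pi)=\sum_{n=0}^N a_n(-1)^n\ge 0$, i.e.\ $a_0 + \sum_{n\ge2}(-1)^n a_n \ge a_1$. This is already suggestive, but it does not by itself feel quite strong enough, so the real idea is to use a \emph{second-order} consequence of non-negativity at $\theta=\pi$: since $\pi$ is an interior point and $f\ge0$ everywhere, if $f(\pi)=0$ then $\pi$ is a minimum and $f''(\pi)\ge0$; more robustly, one combines $f(\pi)\ge0$ with $f''(\pi)\le 0$ being impossible when $f(\pi)$ is small. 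Concretely $f''(\theta)=-\sum_{n} n^2 a_n\cos(n\theta)$, so $f''(\pi)=-\sum_n (-1)^n n^2 a_n$. The clean way to package both facts is the inequality $f(\pi) + \lambda f''(\pi)\ge \min_\theta\bigl(f(\theta)+\lambda f''(\theta)\bigr)$ for suitable $\lambda\ge0$, but the slickest route is simply: non-negativity of $f$ forces, for the particular combination that appears, a bound that after using Condition \textbf{II} to replace $\sum_{n\ge2}a_n$-type sums yields $a_0\ge \tfrac34 a_1$.

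More precisely, here is the route I would actually carry out. Condition \textbf{II} says $a_0 + a_1 + \sum_{n\ge2}a_n \le 2a_1$ (using $a_n\ge0$ for $n\ge2$ from Condition \textbf{III}, which Theorem~\ref{main} inherits implicitly, or just $\sum|a_n|$), hence
\begin{equation*}
\sum_{n\ge2} a_n \le a_1 - a_0.
\end{equation*}
Evaluating $f(\pi)\ge0$ gives $a_0 - a_1 + \sum_{n\ge2}(-1)^n a_n \ge 0$, so $a_1 - a_0 \le \sum_{n\ge2}(-1)^n a_n \le \sum_{n\ge2}a_n$; combined with the previous line this only gives consistency, not the bound, which tells me the first moment is genuinely insufficient and the $n^2$-weighting from Lemma~\ref{seq lemma} must enter. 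So instead I would apply Lemma~\ref{seq lemma}: after normalising so that $\sum_{n\ge2}a_n(n^2-1)=1$, the lemma gives $\sum_{n\ge2}a_n\le 1/3$. The point of the $n^2-1$ weight is that $\sum_{n\ge2}a_n(n^2-1)$ is exactly (up to sign and the $n=1$ term) what you get from a combination like $f(\pi)+cf''(\pi)$ that kills the $a_1$ dependence: indeed $f''(\pi)+f(\pi) = \sum_n (1-n^2)(-1)^n a_n = a_0 + \sum_{n\ge2}(-1)^n(1-n^2)a_n$, and restricting attention to even $N$ (which one shows must hold, else $a_N(-1)^N$ has the wrong sign to keep $f\ge0$ asymptotically — take $\theta\to\pi$ along which the top term dominates) makes the alternating signs cooperate.

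So the key steps in order: (1) show $N$ must be even, by arguing that for odd $N$ the highest-degree term $a_N\cos(N\theta)$ forces $f$ negative somewhere near $\theta=\pi$ — or more carefully, combine $f(\pi)\ge0$ with $f'(\pi)=0$ and the sign of $f''$; (2) use $f(\pi)\ge 0$ to get $a_1 \le a_0 + \sum_{n\ge 2, \text{even}} a_n - \sum_{n\ge3,\text{odd}}a_n \le a_0 + \sum_{n\ge2}a_n$ (discarding the odd terms since the even-$N$, non-negativity analysis will let us bound the alternating sum by the full sum); (3) invoke Lemma~\ref{seq lemma} with the $(n^2-1)$-weighted normalisation, but feed it the right linear functional: the content is that among non-negative $(a_n)_{n\ge2}$ with a fixed value of a certain weighted sum, $\sum a_n$ is maximised at $a_2$ alone, and $a_2/(a_2\cdot 3)=1/3$ is precisely the ratio forcing $a_0/a_1=3/4$ at equality; (4) chase equality: equality throughout forces $a_n=0$ for $n\ge3$, $a_1=2a_0$... wait, at equality $a_0/a_1=3/4$ so $a_1=\tfrac43a_0$, Condition \textbf{II} is tight, and $a_2 = a_1-a_0 = a_0/3$, giving $(a_0,a_1,a_2)\propto(3,4,1)$, i.e.\ \eqref{classical} up to scaling, which is exactly the ``furthermore'' clause of Theorem~\ref{cor}. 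The main obstacle I anticipate is step (2)–(3): making the passage from ``$f\ge0$ on all of $\mathbb{R}$'' to the single scalar inequality tight enough, i.e.\ identifying the correct non-negative linear combination of the evaluations $f(\pi)\ge0$, $-f''(\pi)$-type quantities, and Conditions \textbf{I}–\textbf{II} whose output is exactly $a_0\ge\tfrac34 a_1$ with the classical polynomial as the unique extremiser. Everything else is bookkeeping; the art is in choosing that combination, and Lemma~\ref{seq lemma} is clearly the engine the authors intend for it.
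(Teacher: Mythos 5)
You have correctly identified the ingredients the paper uses — evaluation at $\theta=\pi$, Condition~\textbf{II}, the second-order condition $f''(\pi)\ge0$, and Lemma~\ref{seq lemma} as the engine — but you explicitly flag the passage from ``$f\ge0$'' to the tight scalar inequality as an unresolved obstacle, and your proposed mechanism (a single non-negative combination $f(\pi)+\lambda f''(\pi)$) is not how it closes. The argument is \emph{sequential}, not a linear combination. Normalise $a_0=1$ and write $a_1=1+r$ with $r>0$. Then $f(\pi)\ge0$ gives $r\le\sum_{n\ge2}(-1)^n a_n$, while Condition~\textbf{II} gives $\sum_{n\ge2}|a_n|\le r$; chaining through $\sum(-1)^n a_n\le\sum|a_n|$ forces equality at every step. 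This yields two things at once: $f(\pi)=0$ exactly, and $(-1)^n a_n=|a_n|$ for every $n\ge2$ (so $a_n\ge0$ for even $n$, $a_n\le0$ for odd $n$) — a sign structure that is \emph{derived}, not assumed. Only now does the second-order condition become available: $\pi$ is a global minimum of $f$, $f'(\pi)=0$ automatically for any cosine polynomial, so $f''(\pi)\ge0$ is forced. (The paper frames this in Chebyshev form as $g'(-1)\ge0$ for $g(x)=\sum a_nT_n(x)$, which is the same fact.) With the sign pattern in hand, $f''(\pi)\ge0$ reads $1+r-\sum_{n\ge2}|a_n|n^2\ge0$, i.e.\ $\sum_{n\ge2}|a_n|(n^2-1)\le1$, and Lemma~\ref{seq lemma} applied to $(|a_n|)$ gives $r=\sum_{n\ge2}|a_n|\le 1/3$, hence $a_0/a_1\ge 3/4$.

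Two smaller corrections. Theorem~\ref{main} does not assume Condition~\textbf{III}, so you cannot invoke $a_n\ge0$ for $n\ge2$; it is precisely because Condition~\textbf{II} involves $\sum|a_n|$ that the sign pattern can be deduced rather than postulated. And the parity of $N$ plays no role in Theorem~\ref{main}: your argument that odd $N$ forces negativity near $\pi$ does not work as stated, since $a_N$ may be negative — and indeed the proof above permits $a_n<0$ for odd $n\ge3$. The claim that $N$ is even belongs only to Theorem~\ref{cor}, where Condition~\textbf{III} ($a_n\ge0$) combined with the derived $a_n\le0$ for odd $n\ge3$ forces those coefficients to vanish.
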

\begin{proof}
For simplicity, let $x=\cos(\theta)$ and assume that $a_0 = 1$, for otherwise we can simply force the assumption to be true by re-scaling all coefficients, without affecting the non-negativity condition. Let also $a_1 = 1+r$ with $r> 0$, so our goal is to show that $r \le 1/3$.

Consider the Chebyshev polynomials of the first and second kind, defined respectively by
\begin{equation*}
T_n(\cos(\theta)) = \cos(n\theta) \qquad \text{and}\qquad U_n(\cos(\theta)) = \frac{\sin((n+1)\theta)}{\sin(\theta)}.
\end{equation*}
Applying the former to $f(\theta)$ and substituting $x=\cos\theta$ produces 
\begin{equation*}
    g(x) := 1+ (1+r)x +\sum_{n=2}^N a_n T_n(x) \ge 0
\end{equation*}
for $\abs{x}\leq1$.
In particular, since $T_n(-1)=(-1)^n$, we have
\begin{equation}\label{eqnrupper}
r\le \sum_{n=2}^N (-1)^n a_n,
\end{equation}
while from Condition~\textbf{II} we have
\begin{equation*}\label{r conds}
r\ge \sum_{n=2}^N |a_n|,
\end{equation*}
and so we conclude
\begin{equation}\label{r}
r= \sum_{n=2}^N |a_n|,
\end{equation}
and further $a_n \le 0$ for $n\geq3$ odd, and $a_n \ge 0$ for $n$ even. Therefore,
\begin{equation*}
g(-1) = -r + \sum_{n=2}^N |a_n| =0.
\end{equation*}
We also require $g'(-1) \ge 0$, since otherwise the non-negativity condition is violated.
Recall that  $T'_n(x) = n U_{n-1}(x)$ and $U_{n-1}(-1) = (-1)^{n-1} n$, so
\begin{align*}
g'(-1) &= 1 + r - \sum_{n=2}^N a_n n^2 (-1)^{n} = 1- \sum_{n=2}^N |a_n| (n^2 -1).
\end{align*}
Hence we must maximise $r$ in \eqref{r} subject to the constraint
\begin{equation*}\label{an bound}
\sum_{n=2}^N |a_n| (n^2 -1) \le 1.
\end{equation*}
Clearly we may assume equality here, and apply Lemma \ref{seq lemma}.
We conclude that $N=2$ and $r=1/3$ is optimal.
\end{proof} 

\begin{proof}[Proof of Theorem~\ref{cor}]
Since the optimal polynomial in Theorem~\ref{main} satisfies Condition~\textbf{III}, this follows immediately, except for the statement that $N$ is even.
For this, using \eqref{eqnrupper} and \eqref{r} in the prior proof along with Condition~\textbf{III}, we conclude that $a_n =0$ when $n>2$ is odd, so $N$ must be even.
\end{proof}

\section*{Acknowledgments}

The authors thank the anonymous referee for their helpful comments and suggestions that enhanced the clarity of this work.

\bibliographystyle{amsplain} 
\bibliography{references}

\end{document}